\newtheorem{theorem}{Theorem}[section]
\newtheorem*{theorem*}{Theorem}
\newtheorem{lemma}[theorem]{Lemma}
\newtheorem{proposition}[theorem]{Proposition}
\theoremstyle{definition}
\newtheorem{definition}[theorem]{Definition}
\newcommand{\R}{\mathbb{R}}
\def \b {\beta}
\def\Ric{\text{Ric}}
\def\lf{\left}
\def\ri{\right}
\def\a{\alpha}
\def\l{\lambda}
\def\R{\mathbb{R}}
\def\S{{\operatorname{Scal}}}
\def\vp{\varphi}
\def\Ric{\operatorname{Ric}}
\def\tr{\operatorname{tr}}
\numberwithin{equation}{section}
\newcommand*\owedge{\mathpalette\@owedge\relax}
\newcommand*\@owedge[1]{%
  \mathbin{%
    \ooalign{%
      $#1\m@th\bigcirc$\cr
      \hidewidth$#1\m@th\wedge$\hidewidth\cr
    }%
  }%
}
\begin{document}

\title[Secondary Curvature Operator on Einstein Manifolds]{
Einstein Manifolds Under  Cone Conditions for the Curvature Operator of the Second Kind}

\author[Cheng]{Haiqing Cheng}
\address{School of Mathematical Sciences, Soochow University, Suzhou, 215006, China}
\email{chq4523@163.com}

\author[Wang]{Kui Wang}\thanks{}
\address{School of Mathematical Sciences, Soochow University, Suzhou, 215006, China}
\email{kuiwang@suda.edu.cn}

\subjclass[2020]{53C20, 53C24, 53C25}

\keywords{Einstein manifolds,  Curvature operator of the second Kind,  Sphere theorems}

\begin{abstract}
It is established in \cite{CGT23, Li22JGA, NPW22} that any closed Einstein manifold with two-nonnegative curvature operator of the second kind is either flat or a round sphere. In this paper, we  refine this result by relaxing  the 
curvature condition  to   a cone condition (strictly weaker than two nonnegativity) proposed by Li \cite{Li24}. Precisely,  we prove that any closed Einstein manifold of dimension $n=4$ or $n=5$ or $n\ge 8$, if the curvature operator of the second kind $\mathring{R}$ satisfies 
\begin{align*}
    (\lambda_1+\lambda_2)/2 \ge -\theta(n) \bar \lambda, 
\end{align*}
then the manifold is either flat or a round sphere. Here, $\lambda_1\le \lambda_2\le \cdots\le \lambda_{(n-1)(n+2)/2}$ are the eigenvalues of $\mathring{R}$,  $ \bar \lambda $ is their average, and $\theta(n)$ is a positive constant defined as in \eqref{1.2}.  
\end{abstract}
\maketitle

\section{Introduction and main results}
The classification  for manifolds under various curvature conditions is one of the central topics in geometric analysis. 
There are many remarkable results in this area obtained by various authors (cf. \cite{BW08, BS09, BS11, Ham82, NW10, NW07, PW21}). 
For instance: Meyer \cite{DM71} used the Bochner technique to prove that a closed Riemannian manifold with positive curvature operator of the first kind must be a real homology sphere;  Hamilton \cite{Ham82} used Ricci flow to prove that manifolds of dimension three with positive Ricci curvature are diffeomorphic to spherical space forms; Brendle and Schoen \cite{BS09} proved a differentiable sphere theorem for quarter-pinched metrics.

In 1986, Nishikawa \cite{Nishikawa86} conjectured that a closed Riemannian manifold with positive (respectively, nonnegative) curvature operator of the second kind, as defined in Section \ref{sec 2.2} below, is diffeomorphic to a spherical space form (respectively, a Riemannian locally symmetric space).
Recently, this conjecture has been confirmed by  Cao-Gursky-Tran \cite{CGT23} under the  assumption of two-positive curvature operator of the second kind, by Li \cite{Li21} and Nienhaus-Petersen-Wink \cite{NPW22} under a weaker assumption of three-nonnegativity, yielding the stronger conclusion that the manifold is either flat or  diffeomorphic to a spherical space form, see also \cite[Theorem 1.1]{Li24}.
Following the resolution of Nishikawa's conjecture, several classification results  were proved under various conditions of curvature operator of the second kind, cf. \cite{DF24, DFY24, Li22JGA, Li22Kahler, Li22PAMS,   Li24, Li22product,  NPW22}.

For Einstein manifolds, there are many excellent results concerning the curvature operator of the second kind. Kashiwada \cite{Kashiwada93} showed that closed Einstein manifolds with positive curvature operator of the second kind are spheres. 
Cao-Gursky-Tran \cite{CGT23} proved that for Einstein manifolds, four-positive (respectively, four-nonnegative) curvature operator of the second kind implies constant sectional curvature (respectively, local symmetry). 
Li \cite{Li22JGA} generalized this to $4\frac12$-positive (respectively, $4\frac12$-nonnegative) curvature operator of the second kind.
Later, Nienhaus-Petersen-Wink \cite{NPW22} 
proved that any $n$-dimensional compact Einstein manifold with $M$-nonnegative ($ M< \frac{3n(n+2)}{2(n+4)}$) curvature operator of the second kind is either flat or a rational homology sphere, by developing a Bochner formula for the curvature of the second kind. Recently,  Dai-Fu  \cite{DF24} used  the Bochner formula to give an alternative proof   of that any  closed Einstein manifold of dimension $n=4$ or $n=5$ or $n\ge 8$ with  two-nonnegative curvature operator of the second kind is a constant curvature space.
For further information and recent developments regarding this topic, we refer readers to \cite{CW24, FL24, Li22JGA,  Li22Kahler, Li22PAMS, Li22product, NPWW22}.

Throughout this paper, we denote  by $\mathring{R}$ the curvature operator of the second kind on a Riemannian manifold $M^n$, and  by $N=(n+2)(n-1)/2$  the dimension of the trace-free symmetric two-tensor space $S_0^2(T_p M)$, and by $\l_1\le \l_2\le \cdots\le \l_N$ the eigenvalues of  $\mathring{R}$ and by $\bar \l$ the average of these eigenvalues.

In recent work, Li \cite{Li24} studied  cone conditions of the curvature operator of the second kind, denoted by $\mathcal{C}(\a, \theta)$, which  satisfies 
\begin{align}\label{eq:conecond}
\alpha^{-1} \left( \lambda_1 + \cdots + \lambda_\alpha \right) \geq -\theta \bar{\lambda},
\end{align}
and proved some new  sphere theorems under these cone conditions.
Here $\a\in[1, N)$, $\theta>-1$, and  
\begin{align*}
\lambda_1 + \cdots + \lambda_\alpha := \displaystyle\sum_{i=1}^{\lfloor \alpha \rfloor}
\lambda_i+ (\alpha - \lfloor \alpha \rfloor) \lambda_{\lfloor \alpha \rfloor + 1},
\end{align*}
where $\lfloor \alpha \rfloor:=\max\{ m \in \mathbb{Z}: m \leq \a\}$.
As mentioned above, based on previous results of \cite{CGT23, Li22JGA, NPW22}, we conclude that for any closed Einstein manifold of dimension $n$ satisfying $\mathring{R}\in \mathcal{C}(2, 0)$ (i.e. the curvature operator of the second kind is two-nonnegative) is  either  flat  or a round sphere.
The main purpose of this paper is to study the  Einstein manifolds with the cone condition $\mathring{R}\in \mathcal{C}(2, \theta)$ for $\theta>0$, which is weaker than  two-nonnegative curvature operator of the second kind condition. Throughout this paper, we set
\begin{align}\label{1.2}
    \theta(n)=\begin{cases}
        1/2, &\quad n=4,\\
        2/3, &\quad n=5,\\
      \frac{2N-9n+6}{N+6n-3}, &\quad n\ge 8.
    \end{cases}
\end{align}
Our main result is the following  theorem:
\begin{theorem}\label{thm1}
Let $(M^n,g)$ be an $n$-dimensional closed Einstein manifold. For $n=4$ or $n=5$, or $n\ge 8$, if  the curvature operator of the second kind satisfies
\begin{align}\label{1.1}
\l_1+\l_2\ge -2\theta(n) \bar{\l}, 
\end{align}
then $M$ is either flat or a round sphere. 
\end{theorem}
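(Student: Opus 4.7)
The strategy is to combine the Bochner formula for Einstein manifolds (as in \cite{NPW22, DF24}) with a sharp pointwise algebraic estimate of its quadratic curvature term under the cone condition \eqref{1.1}. For a closed Einstein manifold, the Bochner identity integrated over $M$ takes the schematic form
\[
\int_M |\nabla \Rm|^2\,dV \;+\; \int_M Q(\mathring{R})\,dV \;=\; 0,
\]
where $Q(\mathring{R})$ is a quadratic algebraic expression in the eigenvalues $\lambda_1\le\cdots\le\lambda_N$ of $\mathring{R}$, with coefficients depending on $n$ and on $\bar\lambda$ (which is fixed by the Einstein condition via $\sum_i\lambda_i=N\bar\lambda$). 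Thus, if one can prove $Q(\mathring{R})\ge 0$ pointwise under \eqref{1.1}, both integrands vanish, and the rigidity follows.

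First, I would make $Q$ explicit by invoking the Bochner formula of Nienhaus-Petersen-Wink or Dai-Fu. This expresses $Q$ as a linear combination of $\sum_i\lambda_i^2$, $\bigl(\sum_i\lambda_i\bigr)^2$, and possibly a cubic trace term whose contribution can be absorbed on an Einstein background; after using the trace constraint $\sum_i\lambda_i=N\bar\lambda$, the inequality $Q\ge 0$ becomes a purely finite-dimensional question on the eigenvalues.

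Second, I would carry out the sharp algebraic optimization. By a Lagrange multiplier argument, at an extremal spectrum only a few distinct eigenvalues can appear, and I expect the worst case to be of the form $\lambda_1=\lambda_2=a$ and $\lambda_3=\cdots=\lambda_N=b$ (possibly with a slightly different clustering in higher dimensions). Imposing the trace constraint and saturating the cone boundary, $a$ and $b$ reduce to scalar multiples of $\bar\lambda$, and $Q\ge 0$ becomes a single one-variable inequality. Solving this sharply should reproduce exactly the piecewise formula \eqref{1.2} for $\theta(n)$, with the distinct formulas for $n=4,5$ and $n\ge 8$ reflecting that the extremizing configuration changes with dimension. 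This algebraic optimization is the main technical obstacle, and the dimension-dependent definition of $\theta(n)$, in particular the exclusion of $n=6,7$ (where the natural formula would yield a non-positive $\theta$ and the result would be weaker than the two-nonnegative case already handled in \cite{CGT23, Li22JGA, NPW22}), must emerge naturally from this analysis.

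Finally, once $Q\ge 0$ is established, both $|\nabla\Rm|^2$ and $Q(\mathring{R})$ vanish identically on $M$. The vanishing of $\nabla\Rm$ makes $M$ locally symmetric, and the equality case of the algebraic inequality pins down the spectrum of $\mathring{R}$ at every point. Combined with the Einstein condition, either $\bar\lambda=0$ and the rigidity case forces $\Rm\equiv 0$ (so $M$ is flat), or $\bar\lambda>0$ and the spectral data forces constant positive sectional curvature, i.e.\ a round sphere; this concluding dichotomy parallels the endgame of \cite{DF24}.
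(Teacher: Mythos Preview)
Your outline has the right overall shape, but it glosses over the main technical obstruction for $n\ge 8$. The quantity you call $Q(\mathring{R})$ (here $\langle \Delta R, R\rangle$) is \emph{not} expressible solely in terms of the eigenvalues $\lambda_j$ of $\mathring{R}$ when $n\ge 6$: the Dai--Fu formula reads
\[
3\langle \Delta R,R\rangle=\sum_{j=1}^N \lambda_j\,|S^j W|^2 \;+\; (\text{explicit polynomial in }\bar\lambda,\ \textstyle\sum_j\lambda_j^2,\ \sum_j\lambda_j^3),
\]
and the weights $|S^j W|^2$ depend on the eigen\emph{vectors} of $\mathring{R}$, not just the eigenvalues. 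The paper's essential extra step (Lemma~\ref{lem 4.2}) is to combine the universal bounds $|S^j W|^2\le\tfrac{8n-16}{n}|W|^2$ and $\sum_j|S^jW|^2=\tfrac{4N-12}{n}|W|^2$ with the cone hypothesis and with $\lambda_3\ge(\lambda_1+\lambda_2)/2$ to estimate $\sum_j\lambda_j|S^jW|^2$ from below by a genuine symmetric function of the eigenvalues; only after that does one obtain the pure eigenvalue inequality (Lemma~\ref{lm 3.3}) to which Lagrange multipliers apply. This reduction step is exactly where the inequality $\sum_j|S^jW|^2-2\max_j|S^jW|^2\ge 0$ (valid for $n\ge 6$) is used. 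By contrast, for $n=4,5$ the formula for $\langle\Delta R,R\rangle$ happens to be exact in the eigenvalues already, which is why those dimensions get a separate analysis and a better constant; the piecewise form of $\theta(n)$ reflects this dichotomy in the \emph{input formula}, not a change in extremal configuration as you suggest.

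Two further corrections. First, your predicted worst case $\lambda_1=\lambda_2=a$, $\lambda_3=\cdots=\lambda_N=b$ is wrong: the actual boundary minimizer in Proposition~\ref{pro 4.4} has a \emph{single} low eigenvalue, namely $\lambda_1<\lambda_2=\cdots=\lambda_N$, while still saturating $\lambda_1+\lambda_2=-2\theta\bar\lambda$. Second, your endgame is too fast. Once $\nabla\Rm\equiv 0$, the equality case of the algebraic inequality still permits the non-isotropic spectrum above, so ``spectral data forces constant sectional curvature'' is not immediate. The paper instead appeals to Li's cone theorems to get that $M$ is a rational homology sphere, then to Wolf's classification of simply connected symmetric rational homology spheres, and finally rules out $SU(3)/SO(3)$ by comparing with its known $\mathring{R}$-spectrum.
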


Theorem \ref{thm1} indicates that when 
the curvature operator of the second kind of any closed Einstein manifold satisfies the condition \eqref{1.1}, it must be nonnegative. Thus, Theorem \ref{thm1} serves as a gap theorem for Einstein manifolds under the cone condition $\mathcal{C}(2, \theta)$ imposed on the second curvature operator. Notably, since $\mathcal{C}(2, \theta)$ is strictly weaker than  $\mathcal{C}(2, 0)$ for $\theta>0$, Theorem \ref{thm1} improves the previous result from  Cao-Gursky-Tran \cite{CGT23}, Nienhaus-Petersen-Wink\cite{NPW22}, Li \cite{Li22JGA, Li24}, and Dai-Fu \cite{DF24}, that any closed Einstein manifold must be either flat or a round sphere if  $\mathring{R}$ is two-nonnegative (i.e. $\theta(n)=0$).

The proof of Theorem \ref{thm1} relies on demonstrating the nonnegativity of 
$\langle \Delta R, R \rangle$ 
for Einstein metrics under the curvature condition $\mathcal{C}(2, \theta)$. In dimensions $4$ and $5$,  Dai-Fu \cite{DF24} provide explicit expressions for $\langle \Delta R, R \rangle$ in terms of $\mathring{R}$, allowing us to directly establish its nonnegativity under the
$\mathcal{C}(2, \theta)$ condition. For dimensions $n\ge 6$, however, we must first derive the expression for  $\langle \Delta R, R \rangle$ and  establish its nonnegativity. Due to technical constraints in our method, this approach is currently restricted  to $n\ge 8$. The classification of Einstein manifolds of dimensions $6$ and $7$ under the condition $\mathcal{C}(2, \theta)$ for  positive $\theta$ remains an interesting  problem. Additionally, the sharpness of $\theta(n)$ given in  Theorem \ref{thm1} warrants further investigation.

 The paper is organized as follows:  In Section \ref{sec 2}, we give the definition of the curvature operator of the second kind and derive some identities for the curvatures on Einstein manifolds.
Section \ref{sec 3} is devoted to the proof of Theorem \ref{thm1}.

\section{Preliminaries}\label{sec 2}

\subsection{Notation}
In this section, we set up  some  notation and recall some fundamental facts about the curvature operator of the second kind, and refer to \cite{BK78, CV60, CW24, Kashiwada93,Li21, Nishikawa86} for more details.

Let $(M^n, g)$ be an $n$-dimensional Riemannian manifold. For each $p\in M$, let $V=T_p M$ and $\displaystyle \{e_i\}_{i=1}^n$ be an orthonormal basis of $V$ with respect to $g$. We always identify $V$ with its dual space $V^*$ via the metric $g$. Denote by $S^2(V)$ and $\wedge^2(V)$ the spaces of symmetric two-tensors and two-forms on $V$, respectively. The space $S^2(V)$ splits into $O(V)$-irreducible subspaces as
\begin{equation*}
        S^2(V)=S^2_0(V)\oplus \R g,
\end{equation*}
where $S^2_0(V)$ is the space of traceless symmetric two-tensors, and $g=\sum\limits_{i=1}^{n}e_i\otimes e_i$. 
Denote by $N$ the dimension of $S^2_0(V)$, that is 
\begin{equation*}
    N=\dim(S^2_0(V))=\tfrac{(n-1)(n+2)}{2}.
\end{equation*}
The space of symmetric two-tensors on $\wedge^2V$, denoted by $S^2(\wedge^2 V)$,  has an orthogonal decomposition 
    \begin{equation*}
        S^2(\wedge^2 V) =S^2_B(\wedge^2 V) \oplus \wedge^4 V,
    \end{equation*}
where $S^2_B(\wedge^2 V)$ is the space of algebraic curvature operators on $V$, consisting of all tensors $R\in S^2(\wedge^2V)$  satisfying the first Bianchi identity. 
The tensor product $\otimes$ is defined by 
\begin{equation*}
        (e_i\otimes e_j)(e_k,e_l)=\delta_{ik}\delta_{jl},
    \end{equation*}
    where 
$$\delta_{ij}=\begin{cases}
    1,  & i=j, \\ 0, &i\neq j.
\end{cases}$$
For $e_i, e_j\in V$, the symmetric product $\odot$ is defined by
 \begin{equation*}
        e_i \odot e_j=e_i\otimes e_j +e_j \otimes e_i,
    \end{equation*}
and the wedge product $\wedge$ is defined by
\begin{equation*}
        e_i \wedge e_j=e_i\otimes e_j - e_j \otimes e_i.
    \end{equation*}
Furthermore, the inner product on $\wedge^2(V)$ is defined  by 
    \begin{equation*}
        \langle A, B \rangle =\frac{1}{2}\tr(A^T B),
    \end{equation*}
and the inner product on $S^2(V)$ is defined by
    \begin{equation*}
        \langle A, B \rangle =\tr(A^T B).
    \end{equation*}
Clearly, $\{e_i \wedge e_j\}_{1\leq i<j\leq n}$ forms an orthonormal basis of $\wedge^2(V)$ and $\{\dfrac{1}{\sqrt{2}}e_i \odot e_j\}_{1\leq i<j\leq n} \cup \{\dfrac{1}{2}e_i \odot e_i\}_{1\leq i\leq n}$ forms an orthonormal basis of $S^2(V)$. 

\subsection{Curvature Operator of the Second Kind}\label{sec 2.2}
For an algebraic curvature tensor $R\in S^2_B(\wedge^2(T_p M))$, $R$ can induce two self-adjoint operators (see \cite{Nishikawa86}). 
The first one denoted by $\hat{R}:\wedge^2 (T_p M) \to \wedge^2(T_p M)$ is   defined as 
    \begin{equation*}
        \hat{R}(\omega)_{ij}=\frac{1}{2}\sum_{k,l=1}^n R_{ijkl}\omega_{kl},
    \end{equation*}
which is called  the curvature operator of the first kind. The other one,  $\overline{R}:S^2(T_p M) \to S^2(T_p M)$, is defined by
\begin{align*}
    \overline{R}(\vp)_{ij}=\sum_{k,l=1}^n R_{iklj}\vp_{kl}.
\end{align*}
The curvature operator of the second kind, introduced by Nishikawa \cite{Nishikawa86}, is a symmetric bilinear form
\begin{align*}
    \mathring{R}: S^2_0(T_pM) \times S^2_0(T_pM) \to \R
\end{align*}
obtained by restricting $\overline{R}$ to $S^2_0(T_pM)$, the space of traceless symmetric two-tensors. It was pointed out in \cite{NPW22} that the curvature operator of the second kind $\mathring{R}$ can also be interpreted as the self-adjoint operator
\begin{align}\label{2.1}
\mathring{R} = \pi \circ \overline{R}: S^2_0(T_pM) \to  S^2_0(T_pM),
\end{align}
where $\pi: S^2(T_pM) \to S^2_0(T_pM)$ is the projection map.

Let $\{\l_j\}_{j=1}^N$ be the eigenvalues of $\mathring{R}$, and let $\bar{\l}=\sum_{j=1}^N \l_j/N$ be the average of the eigenvalues of $\mathring{R}$. For any $n$-dimensional Einstein manifold, the scalar curvature, denoted by $\S$, can be expressed in terms of $\mathring{R}$ as 
\begin{align}\label{2.2}
\S=n(n-1)\bar\lambda.
\end{align}
See for instance \cite[Proposition 2.1]{CW24}.

\begin{definition}[\cite{NPW22}]
\label{def2.4}
Let $\mathcal{T}^{(0, k)}(V)$ denote the space of $(0, k)$-tensor space on $V$. For $S\in S^{2}(V)$ and $T\in\mathcal{T}^{(0, k)}(V)$, we define 
\begin{align*}
    S:\  &\mathcal{T}^{(0, k)}(V) \to \mathcal{T}^{(0, k)}(V),\\
    &(ST)({X_1}, \cdots, {X_k}) = \sum\limits_{i = 1}^k {T({X_1}, \cdots , S{X_i}, \cdots, {X_k})},
\end{align*}
and define $T^{S^{2}}\in \mathcal{T}^{(0, k)}(V)\otimes S^{2}(V)$ by
\begin{align*}
    \left\langle
    T^{S^{2}}(X_{1}, \cdots, X_{k}), S
    \right\rangle
    =(ST)\left(X_{1}, \cdots,  X_{k}\right).
\end{align*}
\end{definition}
According to the above definition, if $\{\bar S^{j}\}_{j=1}^N$
 is an orthonormal basis for $S^{2}(V)$, then 
$$T^{S^{2}}=\sum\limits_{j=1}^N\bar S^{j}T\otimes\bar S^{j}.$$
Similarly, we define $T^{S^{2}_{0}}\in \mathcal{T}^{(0, k)}(V)\otimes S^{2}_{0}(V)$ by
$$T^{S^{2}_{0}}=\sum\limits_{j=1}^N S^{j}T\otimes S^{j},$$
where $\{ S^{j}\}_{j=1}^N$ is an orthonormal basis for $S^{2}_{0}(V)$.

\subsection{A formula for Weyl tensor on Einstein manifolds}\label{sec:Weyltensor}
Given $A, B\in S^2(V)$, their Kulkarni-Nomizu product gives rise to $A \owedge B  \in  S^2_B(\wedge^2 V)$ via
\begin{equation*}
    (A \owedge B )_{ijkl} =A_{ik}B_{jl}+A_{jl}B_{ik} -A_{jk}B_{il}-A_{il}B_{jk}.
\end{equation*}
It is well known that the Riemann curvature tensor $R$ can be decomposed into irreducible components (cf. \cite[(1.79)]{CaM20}) as
\begin{align}\label{2.3}
    R=W+\frac{1}{n-2}\Ric\owedge g-\frac{\S}{2(n-1)(n-2)}g\owedge g,
\end{align}
where $W$ is the Weyl curvature tensor and $\Ric$ is the Ricci curvature tensor.   
In terms of any basis, we have
\begin{align*}
        R_{ijkl}=&W_{ijkl}+\frac{1}{n-2}\lf(R_{ik} g_{jl}+R_{jl} g_{ik}-R_{il} g_{jk}-R_{jk} g_{il}\ri)\\
        &-\frac{\S}{(n-1)(n-2)}\lf(g_{ik}g_{jl}-g_{il}g_{jk}\ri).
    \end{align*}
Moreover, if $(M^n, g)$ is an Einstein manifold, then $\Ric=\frac{\S}{n}g$, and  equation \eqref{2.3} simplifies to
\begin{align}\label{2.4}
    R=W+\frac{\S}{2n(n-1)}g\owedge g.
\end{align}

\section{Proof of Theorem \ref{thm1}}\label{sec 3}
The proof of Theorem \ref{thm1} relies on establishing the key inequality
$\langle \Delta R, R\rangle\ge 0$,
where $R$ denotes the Riemann curvature tensor.  Our approach begins by expressing $\langle \Delta R, R\rangle$ in terms of the curvature operator of the second kind  under the cone condition \eqref{1.1} for Einstein manifolds. 

Let us recall the necessary notation: $\mathring{R}$ is the curvature operator of the second kind on $(M^n, g)$, $N=(n+2)(n-1)/2$ is the dimension of $S_0^2(T_pM)$,
$\l_1\le \l_2\le \cdots\le \l_N$ are the eigenvalues of $\mathring{R}$, and $\bar\l =\sum_{i=1}^N \l_i/N$. 
We begin with some lemmas concerning  $ \langle \Delta R, R\rangle$ for 
$n\ge 8$.

\begin{lemma}\label{lm 4.1}
Let $(M^n,g)$ be an  $n$-dimensional Einstein manifold. Denote by $R$ and $W$ the Riemann curvature tensor and the Weyl tensor (see Section \ref{sec:Weyltensor}). Then the following identities hold:
\begin{align}\label{4.1}
    |R|^2=|W|^2+2n(n-1)\bar\lambda^2,
\end{align}
and
\begin{align}\label{4.2}
    \sum_{j=1}^N \lambda_j^2
    = \frac{3}{4} \lf| R \ri|^2 - (n-1)^2 \bar{\lambda}^2.
\end{align}
\end{lemma}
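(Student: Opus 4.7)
For identity \eqref{4.1}, my plan is to substitute the Einstein decomposition \eqref{2.4}, $R=W+\frac{\S}{2n(n-1)}\,g\owedge g$, and take squared norms. The cross term $\langle W,\,g\owedge g\rangle$ vanishes because the Weyl tensor is totally trace-free, hence orthogonal to every tensor built out of $g$ by Kulkarni--Nomizu products. In an orthonormal frame, $(g\owedge g)_{ijkl}=2(\delta_{ik}\delta_{jl}-\delta_{il}\delta_{jk})$, and a direct count gives $|g\owedge g|^{2}=8n(n-1)$. Combined with $\S=n(n-1)\bar\lambda$ from \eqref{2.2}, this yields \eqref{4.1} at once.

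For \eqref{4.2}, the plan is to compute $\tr(\overline R^{2})$ on the full space $S^{2}(T_pM)$ and then subtract the contribution of the one-dimensional subspace $\mathbb{R}g$. Under the Einstein hypothesis, $\overline R$ preserves the orthogonal splitting $S^{2}(T_pM)=S_0^{2}(T_pM)\oplus \mathbb{R}g$: indeed, a short contraction of $\overline R(\varphi)_{ij}=\sum R_{iklj}\varphi_{kl}$ together with $\Ric=\frac{\S}{n}g$ shows that $\tr(\overline R(\varphi))$ is a scalar multiple of $\sum R_{kl}\varphi_{kl}=\frac{\S}{n}\tr\varphi$, which vanishes on $S_0^{2}$, while $\overline R(g)$ is itself a scalar multiple of $g$. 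By \eqref{2.1} the restriction of $\overline R$ to $S_0^{2}(T_pM)$ coincides with $\mathring R$, and on the unit vector $g/\sqrt n$ the eigenvalue has modulus $\S/n=(n-1)\bar\lambda$. Therefore
\[
\tr(\overline R^{2})=\sum_{j=1}^{N}\lambda_j^{2}+(n-1)^{2}\bar\lambda^{2}.
\]

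It then remains to prove $\tr(\overline R^{2})=\tfrac34|R|^{2}$, which is the main step. Using the completeness relation $\sum_\alpha \bar S^{\alpha}_{pq}\bar S^{\alpha}_{kl}=\tfrac12(\delta_{pk}\delta_{ql}+\delta_{pl}\delta_{qk})$ for an orthonormal basis $\{\bar S^{\alpha}\}$ of $S^{2}(T_pM)$, expanding $\tr(\overline R^{2})=\sum_\alpha\langle\overline R^{2}(\bar S^{\alpha}),\bar S^{\alpha}\rangle$ produces two quartic contractions of the Riemann tensor. Using only the antisymmetries of $R$, the first collapses to $|R|^{2}$, while the second reduces, after a relabeling, to $-\Psi$ where $\Psi:=\sum R_{ijkl}R_{iklj}$. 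The main obstacle is the evaluation of $\Psi$: I would multiply the first Bianchi identity $R_{ijkl}+R_{iklj}+R_{iljk}=0$ by $R_{ijkl}$, sum over all indices, and combine with the relabeling symmetry $\sum R_{ijkl}R_{iljk}=\sum R_{ijkl}R_{iklj}$ obtained by swapping $k\leftrightarrow l$, to conclude $\Psi=-|R|^{2}/2$. Putting everything together gives $\tr(\overline R^{2})=\tfrac12|R|^{2}+\tfrac14|R|^{2}=\tfrac34|R|^{2}$, and \eqref{4.2} follows. All remaining steps are direct consequences of the Einstein condition and standard index manipulations.
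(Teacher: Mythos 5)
Your proposal is correct, and it is in fact more self-contained than the paper, which proves Lemma \ref{lm 4.1} simply by citing \cite[Lemma 3.1]{CW24} rather than giving an argument. Your route is the natural one and all the key steps check out: the cross term $\langle W, g\owedge g\rangle$ vanishes since it is a sum of traces of $W$; $|g\owedge g|^2=8n(n-1)$ together with $\S=n(n-1)\bar\lambda$ gives \eqref{4.1}; under the Einstein condition $\overline R$ preserves $S^2_0\oplus\R g$ with $\overline R(g)=\pm\frac{\S}{n}g$, so by \eqref{2.1} one gets $\tr(\overline R^2)=\sum_j\lambda_j^2+(n-1)^2\bar\lambda^2$; and the completeness relation $\sum_\alpha \bar S^\alpha_{pq}\bar S^\alpha_{kl}=\frac12(\delta_{pk}\delta_{ql}+\delta_{pl}\delta_{qk})$ yields $\tr(\overline R^2)=\frac12\sum R_{iklj}^2+\frac12\sum R_{iklj}R_{ilkj}$, whose second contraction equals $-\frac12\Psi$ with $\Psi=\sum R_{ijkl}R_{iklj}=-\frac12|R|^2$ by the first Bianchi identity, giving $\tr(\overline R^2)=\frac34|R|^2$ and hence \eqref{4.2}. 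The only cosmetic point is the bookkeeping of the factors $\frac12$ in the sentence describing the two quartic contractions (``the first collapses to $|R|^2$, the second to $-\Psi$'' refers to the contractions before the $\frac12$ prefactors), but your final tally $\frac12|R|^2+\frac14|R|^2=\frac34|R|^2$ is exactly right; it would also be worth stating explicitly that the norm used throughout is the full contraction $|R|^2=\sum_{ijkl}R_{ijkl}^2$, which is the convention under which \eqref{4.1}--\eqref{4.2} (and the sphere check $\sum_j\lambda_j^2=N$, $\bar\lambda=1$) hold.
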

\begin{proof}
See  \cite[Lemma 3.1]{CW24}.
\end{proof}
By applying  the curvature condition \eqref{1.1}, we have the following estimate.
\begin{lemma}\label{lem 4.2}
Let $(M^n, g)$ be an  Einstein manifold of dimension $n\ge 6$. Suppose  $\l_1+\l_2\ge-2\theta\bar\l$ for $\theta\ge 0$, then
   \begin{align}\label{4.3}
    \sum_{j=1}^N
    \lambda_{j} |S^{j} W|^2 
    \ge -\frac{16(N-3)}{3n}\theta \bar\lambda \sum_{j=1}^N \lambda_j^2 
    +\frac{16N(N-3)}{3n} \theta \bar\lambda^3,
\end{align}
where   $S^{j} W$ is defined by Definition \ref{def2.4}.
\end{lemma}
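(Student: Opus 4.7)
The plan is to reduce the target inequality to a combinatorial bound on the $\lambda_j$'s via a universal identity for $\sum_j|S^jW|^2$.

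First, I would establish
$$\sum_{j=1}^N |S^j W|^2 \ =\ \frac{4(N-3)}{n}\,|W|^2$$
by expanding $(S^jW)_{ijkl}$ per Definition \ref{def2.4} into four slot-insertion terms, summing $|S^jW|^2$ over $j$ using the completeness relation
$$\sum_{j=1}^N(S^j)_{ab}(S^j)_{cd} \ =\ \frac{1}{2}(\delta_{ac}\delta_{bd}+\delta_{ad}\delta_{bc})-\frac{1}{n}\delta_{ab}\delta_{cd}$$
for the orthonormal basis $\{S^j\}\subset S^2_0(V)$, and then collapsing the resulting contractions via the Riemann symmetries, the Ricci-flatness $\sum_i W_{ijik}=0$ of the Weyl tensor, and the first Bianchi identity. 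The Bianchi step is what makes the identity close: it forces the two nontrivial cross contractions $\sum W_{ABCD}W_{ADCB}$ and $\sum W_{ABCD}W_{DBCA}$ to sum to $|W|^2$, after which the $4$ diagonal and $12$ cross contributions combine into $\frac{4(N-3)}{n}|W|^2$.

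Combining this identity with $|W|^2=\frac{4}{3}\bigl(\sum_j\lambda_j^2-N\bar\lambda^2\bigr)$ from Lemma \ref{lm 4.1}, the estimate \eqref{4.3} is equivalent to
$$\sum_{j=1}^N (\lambda_j+\theta\bar\lambda)\,|S^jW|^2 \ \ge\ 0.$$
The cone condition gives $\lambda_j\ge\lambda_2\ge\frac{1}{2}(\lambda_1+\lambda_2)\ge -\theta\bar\lambda$ for all $j\ge 2$, so every term with $j\ge 2$ is nonnegative, and the only potentially negative contribution is $(\lambda_1+\theta\bar\lambda)|S^1W|^2$. I would pair it with the $j=2$ term via the algebraic decomposition
$$(\lambda_1+\theta\bar\lambda)a_1+(\lambda_2+\theta\bar\lambda)a_2 \ =\ \frac{\lambda_1+\lambda_2+2\theta\bar\lambda}{2}(a_1+a_2)+\frac{\lambda_1-\lambda_2}{2}(a_1-a_2),$$
with $a_j:=|S^jW|^2$; the first piece is nonnegative by the cone condition \eqref{1.1}, and the second piece is nonnegative whenever $a_1\le a_2$.

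The main obstacle is the opposite regime $a_1>a_2$, which cannot be resolved from the cone condition and $a_j\ge 0$ alone. In this case I expect to invoke the Einstein-specific identification $\mathring R=\overline W|_{S^2_0}+\bar\lambda\,I$ on $S^2_0(V)$ that follows from the decomposition \eqref{2.4}: each $S^j$ is simultaneously an eigenvector of $\overline W$ with eigenvalue $\lambda_j-\bar\lambda$, and this correlation between the action of $S^j$ on $W$ and the eigenvalue $\lambda_j$ is what must constrain $a_1$ in terms of $\sum_{j\ge 3}a_j$. Working out the correct $n$-dependent coupling is the technical heart of the argument and the likely source of the restriction $n\ge 6$.
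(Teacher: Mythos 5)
Your reduction is correct and matches what the paper ultimately proves: since $\sum_{j=1}^N|S^jW|^2=\frac{4N-12}{n}|W|^2$ and $|W|^2=\frac43\bigl(\sum_j\lambda_j^2-N\bar\lambda^2\bigr)$, inequality \eqref{4.3} is exactly $\sum_j(\lambda_j+\theta\bar\lambda)|S^jW|^2\ge 0$, and under \eqref{1.1} only the $j=1$ term can be negative. But the case you flag as "the main obstacle" ($a_1=|S^1W|^2$ larger than $a_2$) is precisely the heart of the lemma, and you leave it unproved; as it stands the proposal has a genuine gap. The ingredient you are missing is not the spectral identification $\mathring R=\mathring W+\bar\lambda\,\mathrm{id}$ (which is true for Einstein metrics but does not by itself control $|S^1W|^2$: note that $S^jW$ is the derivation action of $S^j$ on the four-tensor $W$ from Definition \ref{def2.4}, not $\mathring W(S^j)$, so the eigenvector property of $S^j$ gives no direct bound on $a_1$). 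What closes the argument in the paper is the pointwise estimate of Dai--Fu \cite{DF24}, $|S^jW|^2\le \frac{8n-16}{n}|W|^2$ for every $j$, used together with the total-sum identity: writing $a_\alpha=\max_j a_j$, one has
\begin{align*}
\sum_{j=1}^N\lambda_j a_j\;\ge\;(\lambda_1+\lambda_2)\,a_\alpha+\lambda_3\Bigl(\sum_{j=1}^N a_j-2a_\alpha\Bigr),
\end{align*}
and the bracket is $\ge\frac{4N-12-2(8n-16)}{n}|W|^2=\frac{2(n^2-7n+8)}{n}|W|^2\ge 0$ exactly when $n\ge 6$; then $\lambda_1+\lambda_2\ge-2\theta\bar\lambda$ and $\lambda_3\ge-\theta\bar\lambda$ give the stated lower bound $-\theta\bar\lambda\cdot\frac{4(N-3)}{n}|W|^2$. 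So the $n$-dependent coupling you anticipate is this cap on how much of $\sum_j a_j$ a single (or two) $a_j$ can absorb, and it is also the true source of the restriction $n\ge 6$. Without that pointwise bound (or some substitute), your pairing of the $j=1$ and $j=2$ terms cannot be completed, since the cone condition and $a_j\ge0$ alone are indeed insufficient, as you yourself observe.
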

\begin{proof}
Denote by  $$|S^{\a} W|=\max\limits_{1\le i \le N}|S^{i} W|,$$ 
and using $\l_1\le \l_2\le\cdots\le \l_N$ we estimate  
\begin{align}\label{3-4}
\begin{split}
      \sum_{j=1}^N
    \lambda_{j} |S^{j} W|^2
    &\ge \sum_{j=1}^2\l_j|S^{j} W|^2+\l_3\sum_{j=3}^N|S^{j} W|^2\\
    &=\sum_{j=1}^2(\l_j-\l_3)|S^{j} W|^2+\l_3\sum_{j=1}^N|S^{j} W|^2\\
    &\ge|S^{\a} W|^2\sum_{j=1}^2(\l_j-\l_3)+\l_3\sum_{j=1}^N|S^{j} W|^2\\
    &=|S^{\a} W|^2(\l_1+\l_2)+(\sum_{j=1}^N|S^{j} W|^2-2|S^{\a} W|^2)\l_3.
\end{split}
  \end{align}
Recall from  (4.1) and (4.5) of \cite{DF24} that 
\begin{align*}
\sum_{j=1}^{N}|S^jW|^2=\frac{2(n^2+n-8)}{n}|W|^2=\frac{4N-12}{n}|W|^2,
\end{align*}
and 
\begin{align*}
|S^jW|^2\le\frac{8n-16}{n}|W|^2, \quad 1\le j\le N,
\end{align*}
we have
\begin{align}\label{eq:SW}
    |S^{\a} W|^2(\l_1+\l_2)\ge-2\theta\bar\l|S^{\a} W|^2\ge-2\theta\bar\l\frac{8n-16}{n}|W|^2, 
\end{align}
and 
\begin{align}\label{3.5}
  \sum_{j=1}^N|S^{j} W|^2-2|S^{\a} W|^2 \ge\frac{4(N-4n+5)}{n}|W|^2= \frac {2(n^2-7n+8)}{n}|W|^2 \ge 0, 
\end{align}
where we used   $n\ge 6$ in \eqref{3.5}.
Thanks to \eqref{3.5}, $\l_3\ge (\l_1+\l_2)/2$, and the curvature condition $\l_1+\l_2\ge-2\theta\bar{\l}$, we deduce from \eqref{3-4} that  
\begin{align}\label{4.4}
(\sum_{j=1}^N|S^{j} W|^2-2|S^{\a} W|^2)\l_3\ge-\theta\bar\l\frac{2(n^2-7n+8)}{n}|W|^2.
\end{align}
From \eqref{4.1} and \eqref{4.2}, we have 
\begin{align}  \label{3-7}  
|W|^2    
= \frac{4}{3} \sum_{j=1}^N \lambda_j^2    
-\frac{2}{3}(n-1)(n+2)\bar\lambda^2=\frac{4}{3} \sum_{j=1}^N \lambda_j^2    
-\frac{4N}{3}\bar\lambda^2.
\end{align}
Substituting \eqref{3-7} into \eqref{eq:SW} and \eqref{4.4} yields inequality \eqref{4.3}.
\end{proof}

\begin{lemma}\label{lm 3.3}
Let $(M^n, g)$ be an  Einstein manifold of dimension $n\ge 6$. Suppose  $\l_1+\l_2\ge-2\theta\bar\l$ for $\theta\ge 0$, then
\begin{align}\label{4.5}
\begin{split}
    \frac{9n}{16} \langle \Delta R, R \rangle
    \ge & \Big[ N(N-3)\theta-(2N-9n+6)N \Big]\bar\l^3\\
   &+  \Big[ (2N-12n+6)-(N-3)\theta \Big] \bar\lambda \sum_{j=1} ^{N}\lambda_{j}^2+3n \sum_{j=1}^{N}\lambda_{j}^3,
    \end{split}
\end{align}
where $\Delta$ is the Laplace-Beltrami operator with respect to the metric $g$. 
\end{lemma}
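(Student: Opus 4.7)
The plan is to write $\langle\Delta R,R\rangle$ on an Einstein manifold as a cubic invariant of the Riemann tensor, decompose it according to \eqref{2.4} into contributions from the curvature operator of the second kind $\mathring R$ and from the Weyl tensor $W$, and then absorb the mixed $\mathring R$--$W$ piece using Lemma \ref{lem 4.2}.

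First I would derive the Bochner-type identity for $\langle\Delta R,R\rangle$. On an Einstein manifold the contracted second Bianchi identity gives $\nabla^k R_{ijkl}=0$, so commuting covariant derivatives expresses $\Delta R_{ijkl}$ as a quadratic polynomial in $R$ (schematically $R\ast R-\tfrac{2\S}{n(n-1)}R$). Contracting with $R$ and tracing yields $\langle\Delta R,R\rangle$ as a cubic polynomial in $R$. This calculation is closely modelled on the Bochner formulas in Nienhaus--Petersen--Wink \cite{NPW22} and Dai--Fu \cite{DF24}; the latter already carries it out in dimensions $4$ and $5$ and lists the cubic invariants that appear.

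Next I would substitute the decomposition \eqref{2.4} to rewrite each cubic piece in the preferred basis. The three terms obtained by pairing three copies of $W$ produce $\operatorname{tr}(\mathring R^3)=\sum\lambda_j^3$ plus the coupling $\sum_j\lambda_j|S^jW|^2$ (this is where the action of $S^j\in S^2_0$ on $W$ enters, via Definition \ref{def2.4}); terms with one or two factors of $\tfrac{\S}{2n(n-1)}g\owedge g$ give pure scalar contributions $\bar\lambda|W|^2$ and $\bar\lambda^3$. Applying \eqref{3-7} to convert $|W|^2$ into $\sum\lambda_j^2$ and $\bar\lambda^2$, the identity organizes itself as
\begin{align*}
\frac{9n}{16}\langle\Delta R,R\rangle
 = 3n\sum_{j=1}^N\lambda_j^3 + A\sum_{j=1}^N\lambda_j|S^jW|^2 + B\,\bar\lambda\sum_{j=1}^N\lambda_j^2 + C\,\bar\lambda^3,
\end{align*}
for constants $A,B,C$ depending only on $n$, where $A$ turns out to be positive (this is the key sign, and it is what ultimately forces the restriction $n\ge 8$ via inequality \eqref{3.5}).

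Finally I would invoke Lemma \ref{lem 4.2} to bound $A\sum_j\lambda_j|S^jW|^2$ from below under the hypothesis $\lambda_1+\lambda_2\ge -2\theta\bar\lambda$. The resulting $-\theta\bar\lambda\sum\lambda_j^2$ and $+\theta\bar\lambda^3$ contributions combine with $B\bar\lambda\sum\lambda_j^2$ and $C\bar\lambda^3$, and matching coefficients against the target yields exactly \eqref{4.5} with the stated coefficients $N(N-3)\theta-(2N-9n+6)N$ for $\bar\lambda^3$ and $(2N-12n+6)-(N-3)\theta$ for $\bar\lambda\sum\lambda_j^2$.

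The main obstacle is the first step: the bookkeeping needed to identify $A$, $B$, $C$ correctly and in particular to verify that $A$ matches the coefficient in Lemma \ref{lem 4.2} so that the $\theta$-terms land in precisely the form stated. This is a lengthy but essentially algebraic computation with the algebraic curvature tensor and the $S^2_0$-basis of Section \ref{sec 2}; the substance of the lemma lies in this identity, while the final inequality then follows by direct substitution.
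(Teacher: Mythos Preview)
Your plan is correct and matches the paper's proof. Two small corrections: the identity you propose to derive in the first two steps is already available in full generality (not just $n=4,5$) as equality (3.3) of Dai--Fu \cite{DF24}, which the paper simply quotes and then rewrites using $\S=n(n-1)\bar\lambda$ and $N=(n-1)(n+2)/2$; so the ``lengthy but essentially algebraic computation'' you anticipate is unnecessary. Also, the positivity in \eqref{3.5} only needs $n\ge 6$, consistent with the hypothesis of this lemma---the restriction to $n\ge 8$ enters only later, in Proposition~\ref{pro 4.4}, through the specific choice of $\theta(n)$.
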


\begin{proof}
Let
$\S$  be the scalar curvature of $M$. Recall from equality (3.3) of \cite{DF24}  that
\begin{align*}
    3\langle \Delta R,R\rangle
  =&\displaystyle\sum_{j=1}^N {{\lambda _j }{{\left| {{S^j }W} \right|}^2}}+8\left(\frac{-n^3+6n^2+12n-8}{3n^4(n-1)^2} \right){\S^3}\\
   &+8\left(\frac{2n^2-22n+8}{3n^2(n-1)} \right)\S\displaystyle \sum_{j=1}^N  {\lambda _j ^2}  + 16\displaystyle \sum_{j=1}^N {\lambda_j ^3}.
\end{align*}
 Substituting  $\S=n(n-1)\bar \lambda$ and $N=(n-1)(n+2)/2$ into the above equality yields
 \begin{align*}
    3\langle \Delta R,R\rangle
  =&\sum_{j=1}^N {{\lambda _j }{{\left| {{S^j }W} \right|}^2}}
  -\frac{16N(2N-9n+6)}{3n}\bar\lambda^3\\
   &+\frac{16(2N-12n+6)}{3n} \bar\lambda  \sum_{j=1}^N \lambda _j ^2  + 16\sum_{j=1}^N {\lambda _j ^3}.
\end{align*}
Using  \eqref{4.3}, we estimate  that
\begin{align*}
    3\langle \Delta R,R\rangle
    \ge & 
    -\frac{16(N-3)\theta}{3n} \bar\lambda \sum_{j=1}^N  \lambda_j^2 +\frac{16N(N-3)}{3n} \theta \bar\lambda^3+ 16\sum_{j=1}^N {\lambda _j ^3}\\
     &-\frac{16N(2N-9n+6)}{3n}\bar\lambda^3
   +\frac{16(2N-12n+6)}{3n} \bar\lambda  \sum_{j=1}^N \lambda _j ^2   \\
    =\quad& \frac{16}{3n}\Big[ N(N-3)\theta-(2N-9n+6)N \Big]\bar\l^3\\
   &+ \frac{16}{3n} \Big[ (2N-12n+6)-(N-3)\theta \Big] \bar\lambda \sum_{j=1} ^{N}\lambda_{j}^2+16 \sum_{j=1}^{N}\lambda_{j}^3,
\end{align*}
proving \eqref{4.5}. 
\end{proof}

The following proposition is a crucial step in the proof of the main theorem, establishing the nonnegativity of $\langle \Delta R, R\rangle$ under the given curvature constraints.
\begin{proposition}\label{pro 4.4}
Let $(M^n,g)$ be an Einstein manifold of dimension $n\ge 8$ or $n=4$ or $n=5$. Suppose the curvature operator of the second kind satisfies the cone condition \eqref{1.1}. Then 
\begin{align}\label{k-e}
   \left\langle \Delta R, R\right\rangle\ge 0,
\end{align}
with the equality holding if and only if 
 \begin{align*}
    \l=\left(1, 1,\cdots,1\right)\bar\l \quad  \text{or} \quad \l=\left(-\frac{2(N-1)\theta+N}{N-2}, \frac{N+2\theta}{N-2}, \cdots, \frac{N+2\theta}{N-2}\right)\bar\l.
\end{align*}
Here, $\bar \l=\sum_{j=1}^N \l_j/N$ is the average of $\l_j$.
\end{proposition}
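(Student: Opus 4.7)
The plan is to proceed by dimension. In all three regimes the common strategy is to express $\langle \Delta R, R\rangle$ as a polynomial in the eigenvalues $\lambda_1\le\cdots\le\lambda_N$ of $\mathring{R}$ and the average $\bar\lambda$, and then to exploit the cone condition $\lambda_1+\lambda_2\ge -2\theta(n)\bar\lambda$ together with the ordering to prove its nonnegativity; the equality case is tracked alongside.

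For $n=4$ and $n=5$, I would bypass Lemma~\ref{lm 3.3} and work instead from the completely explicit low-dimensional expansions of $\langle \Delta R,R\rangle$ in terms of the $\lambda_j$ due to Dai--Fu~\cite{DF24}. Imposing the constraints $\sum_j \lambda_j=N\bar\lambda$ and $\lambda_1+\lambda_2\ge -2\theta(n)\bar\lambda$, with $\theta(4)=1/2$ and $\theta(5)=2/3$, the problem collapses to an elementary polynomial inequality on $\R^N$ whose nonnegativity and equality case can be verified by a direct computation, tracking equality in each step.

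For $n\ge 8$, Lemma~\ref{lm 3.3} already reduces the problem to showing that the right-hand side of~\eqref{4.5} is nonnegative when $\theta=\theta(n)$. My first move is to centerize by $x_j=\lambda_j-\bar\lambda$; then $\sum_j x_j=0$, and a short computation using the specific value of $\theta(n)$ in~\eqref{1.2} shows that the pure $\bar\lambda^3$ contribution cancels, reducing the inequality to
\[
3n\sum_{j=1}^N x_j^3 + \frac{18n(N-n+1)}{N+6n-3}\,\bar\lambda \sum_{j=1}^N x_j^2 \ge 0,
\]
in which the coefficient of $\bar\lambda \sum x_j^2$ is positive for $n\ge 6$. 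I would treat this as a constrained extremum problem: minimize the left-hand side over $(x_j)$ subject to $\sum_j x_j=0$, $x_1\le\cdots\le x_N$ and $x_1+x_2\ge -2(\theta(n)+1)\bar\lambda$. Lagrange multipliers applied to the active constraints force $x_3,\ldots,x_N$ to satisfy a common quadratic and $x_1,x_2$ to satisfy a shifted quadratic, and the ordering then pushes the minimum onto the configuration $\lambda_1=a$, $\lambda_2=\cdots=\lambda_N=b$ with $a+b=-2\theta(n)\bar\lambda$, which, together with $\sum \lambda_j = N\bar\lambda$, determines $a,b$ uniquely up to the scale $\bar\lambda$. A direct substitution shows that the reduced inequality vanishes identically on this configuration, which both establishes nonnegativity and recovers the equality case stated in the proposition.

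The main obstacle is the constrained extremum step for $n\ge 8$. One must carefully rule out candidate minimizers in which $\{x_3,\ldots,x_N\}$ splits into two distinct values, or in which the cone constraint is inactive, and one must also handle the subcase $\bar\lambda<0$, where the coefficient of $\bar\lambda \sum x_j^2$ flips sign while the cone condition becomes correspondingly restrictive and forces $\lambda_j>0$ for $j\ge 2$. It is this sign analysis — and not the use of Lemma~\ref{lem 4.2}, which is already valid for $n\ge 6$ — that accounts for the technical restriction $n\ge 8$ and for the precise value of $\theta(n)$ in~\eqref{1.2}; the latter is exactly what makes the final algebraic identity hold on the extremal configuration.
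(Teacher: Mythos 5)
Your centering $x_j=\lambda_j-\bar\lambda$ is a correct and slightly cleaner normalization than the paper's rescaling $x_j=\lambda_j/\bar\lambda$: indeed, with $\theta=\theta(n)$ the right-hand side of \eqref{4.5} becomes $3n\sum_j x_j^3+6n(1+\theta)\bar\lambda\sum_j x_j^2$, which matches your coefficient $\tfrac{18n(N-n+1)}{N+6n-3}$. But from that point on your argument is a plan, not a proof: the decisive step --- showing that the constrained minimum of this cubic under $\sum_j x_j=0$, the ordering, and $x_1+x_2\ge -2(1+\theta)\bar\lambda$ equals $0$, and locating \emph{all} minimizers --- is simply asserted (``the ordering then pushes the minimum onto the configuration $\lambda_1=a$, $\lambda_2=\cdots=\lambda_N=b$''). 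That assertion is precisely where the paper's proof does all of its work: one must enumerate the interior critical points (two-valued configurations $Q_m$), the boundary critical points (the families $P_{k,l}(a)$ with three distinct values), and then run the monotonicity comparisons \eqref{4.12}, \eqref{4.9}, \eqref{3-20}, \eqref{4.10} to push everything to the single boundary configuration; you acknowledge this as ``the main obstacle'' but do not resolve it, so the nonnegativity is not established. Moreover your claimed minimizing set omits the interior minimizer $x=0$ (i.e.\ $\lambda=(1,\dots,1)\bar\lambda$, where the cone constraint is inactive), which is needed for the equality case stated in the proposition.

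Your closing diagnosis is also off. Under \eqref{1.1} with $\theta\ge 0$ the case $\bar\lambda<0$ is vacuous: if $\bar\lambda<0$ then $\lambda_1+\lambda_2\ge-2\theta\bar\lambda> 0$ forces $\lambda_j>0$ for all $j\ge 2$, whence $N\bar\lambda=\sum_j\lambda_j>0$, a contradiction; the paper simply records $\bar\lambda\ge 0$ and treats $\bar\lambda=0$ trivially. So there is no ``sign analysis'' to do, and it is not the source of the restriction $n\ge 8$; that restriction comes from requiring $\theta(n)=\tfrac{2N-9n+6}{N+6n-3}>0$, i.e.\ $n^2-8n+4>0$, and the positivity $\theta>0$ (the paper's $\beta>1$) is used repeatedly in the critical-point comparisons. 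Likewise, the vanishing of your reduced expression on the boundary configuration holds for \emph{every} $\theta$ once $\theta$ has been inserted into both the constraint and the coefficient, so it does not by itself explain the value of $\theta(n)$; for general $\theta$ the centered form of \eqref{4.5} is $3n\sum_j x_j^3+\big[(2N-3n+6)-(N-3)\theta\big]\bar\lambda\sum_j x_j^2$, and demanding that this vanish (rather than go negative) at the boundary configuration is what pins down $\theta=\theta(n)$ via the coefficients of Lemma~\ref{lm 3.3}.
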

\begin{proof}
We prove the case of $n\ge 8$. Let $\l=(\l_1, \l_2,\cdots,\l_N)$ and  denote the right hand side of \eqref{4.5} by $f(\l)$, namely
\begin{align*}
\begin{split}
    f(\l):=&\frac{16}{3n}\Big[ N(N-3)\theta-(2N-9n+6)N \Big]\bar\l^3\\
   &+  \frac{16}{3n}\Big[ (2N-12n+6)-(N-3)\theta \Big] \bar\lambda \sum_{j=1} ^{N}\lambda_{j}^2+16 \sum_{j=1}^{N}\lambda_{j}^3,
\end{split}
\end{align*}
where 
$$\theta(n)=\frac{2N-9n+6}{N+6n-3}$$
 by \eqref{1.2}.
Using the identities
$$
(N-3)\theta-(2N-9n+6)=-6n\theta,
$$
and
$$
(2N-12n+6)-(N-3)\theta=6n\theta-3n,
$$
we simplify   $f(\l)$ to
\begin{align}
    f(\l)=16\left[-2N\theta \bar\l^3+(2\theta-1)\bar\lambda \sum_{j=1} ^{N}\lambda_{j}^2+ \sum_{j=1}^{N}\lambda_{j}^3\right].
\end{align}
From the curvature condition \eqref{1.1}, we have $\bar \l\ge 0$. If $\bar \l=0$, then $\l_1=\l_2=\cdots=\l_N=0$, and \eqref{k-e} holds trivially due to \eqref{4.5}. For the case $\bar \l>0$, perform a change of variables:
 \begin{align*}
   x_j=\l_j/\bar\l,\quad  x=(x_1, x_2, \cdots, x_N), \quad \text{and}\quad  \beta=1+\theta. 
 \end{align*}
This allows us to express $f(\l)$ as
\begin{align*}
f(\l)=16 \bar\l^3 F(x),
\end{align*}
 where
\begin{align*}
F(x)=\sum_{i=1}^{N}x_i^3-(3-2\beta)\sum_{i=1}^Nx_i^2+2N(1-\beta).
\end{align*} 
Thus it suffices to show:
$$F(x)\ge 0$$
under the constraints
    \begin{align}\label{4.6}   
 \sum_{j=1}^N x_j=N, \quad \text{and}\quad  x_i+x_j\ge -2(\beta-1)\quad \text{for}\quad  i\neq j, 
    \end{align}
and the minimizers being
$$
(1,1,\cdots, 1) \quad \text{and} \quad \left(-\frac{2(N-1)(\beta-1)+N}{N-2}, \frac{N+2(\beta-1)}{N-2}, \cdots, \frac{N+2(\beta-1)}{N-2}\right).
$$

We now use the method of Lagrange multipliers to prove $F(x)\ge 0$  under the given  constraints \eqref{4.6}.

{\bf Step 1.} Identify the possible critical points within the domain. Consider the set
\begin{align*}
  \Omega=\{x\in \R^N: \sum_{j=1}^N x_j=N, \text{\quad $x_i+x_j>-2(\beta-1)$ for all $i\neq j$}\},  
\end{align*}
and define the Lagrangian function as
$$
\mathcal{L}(x, \xi):=F(x)+\xi(\sum_{i=1}^Nx_i-N).
$$
By computing the derivatives of  $\mathcal{L}$,  the possible critical points must satisfy 
\begin{align*}
\begin{cases}
    3x_i^2-2(3-2\b)x_i+\xi=0,\\
    \displaystyle\sum_{i=1}^Nx_i=N.
\end{cases}
\end{align*}
Hence, the possible critical points are of the form: 
\begin{align*}
Q_{m}=(\underbrace{r, \cdots, r}_{m}, \underbrace{s, \cdots, s}_{N-m}), \quad \text{for $m\le N/2$,}
\end{align*}
 where $r$ and $s$ are determined by the equations:
\begin{align}\label{4.11}
    \begin{cases}
        r+s=\frac{2(3-2\b)}{3}, \\
        mr+(N-m)s=N.
    \end{cases}
\end{align}
If $m=N/2$,  equations \eqref{4.11} are unsolvable for  $\b>1$. 
For $m<\frac{N}{2}$, solving the system  \eqref{4.11} yields 
\begin{align*}
r=A-\frac{N-NA}{N-2m},\quad s=A+\frac{N-NA}{N-2m},
\end{align*}
where $A=(3-2\b)/3<1$ (since $\b>1$).
Substituting these into $F$, we obtain
\begin{align*}
    F(Q_m)=-2NA^3-3A^2(N-NA)+\frac{(N-NA)^3}{(N-2m)^2}+2N(1-\b),
\end{align*}
which implies
\begin{align}\label{4.12}
    F(Q_m)> F(Q_0)=F(1,\cdots,1)=0, \quad 1\le m\le N/2.
\end{align}

{\bf Step 2.} Now we deal with the boundary case:  $x_1+x_2=-2(\b-1)$, $\sum_{j=1}^N x_j=N$, and $x_1\le x_2\le \cdots\le x_N$. 

To do this, we fix $x_2$ first: let $x_2=a$, and then $x_1=-2(\b-1)-a$. So by the constraints $x_1\le x_2\le \cdots \le x_N$ and $\sum_{j=1}^N x_j=N$, we have
\begin{align}\label{3-14}
     -(\b-1)\le a\le \frac{N-2+2\b}{N-2}.
\end{align}
For $1\le k \le N-2$, let $\Omega_k(a)$ be the set:
\begin{align*}
    \begin{cases}
        x_1=-2(\b-1)-a, \\
        x_2=x_3=\cdots=x_{N-k}=a,\\
        x_j>a, \quad j>N-k,\\
        \sum_{j=3}^N x_j=N+2(\b-1),
    \end{cases}
\end{align*}
 and the Lagrangian function is 
$$
\mathcal{L}(x_{N-k+1},\cdots, x_N, \mu)=F(x)+\mu\big(\sum_{j=3}^Nx_j-N+2-2\b\big),
$$
and by calculating the derivatives,  the possible critical points
$$x=\big(-2\b+2-a, \underbrace{a, \cdots, a}_{N-1-k}, x_{N-k+1}, \cdots, x_N 
    \big)$$
satisfy
    \begin{align*}
   \begin{cases}
       3 x_j^2-2(3-2\b) x_j + \mu=0,\quad j>N-k,\\
       \sum_{j=3}^N x_j=N+2\b-2.
   \end{cases} 
\end{align*}
So the possible critical points are given  by
\begin{align*}
  P_{k, l}(a):=  \big( -2\b+2-a,  \underbrace{a,  \cdots, a}_{N-1-k}, \underbrace{c, \cdots, c}_{l}, \underbrace{d, \cdots, d}_{k-l} \big) \quad \text{for $0\le l\le \frac{k}{2}$,}
\end{align*}
 where $c$ and $d$ are given by 
\begin{align}\label{4.7}
    \begin{cases}
        c+d=\frac{2(3-2\b)}{3},\\
        lc+(k-l)d=N-2+2\b-(N-2-k)a.
    \end{cases}
\end{align}
To simplify calculations, we denote 
\begin{align*}
    A=\frac{3-2\b}{3} \quad \text{and}\quad B=N-2+2\b-(N-2-k)a.
\end{align*}

When $l=k/2$ and equations \eqref{4.7} are solvable, it shall hold $B-kA=0$. But using  $a\le (N-2+2\b)/(N-2)$ (see \eqref{3-14}), we estimate 
\begin{align}\label{3-17}
   B-kA\ge N-2+2\b-(N-2-k)\frac{N-2+2\b}{N-2}-kA=\frac{2k\b(N+1)}{3(N-2)}>0.  
\end{align}
Thus we conclude that \eqref{4.7} are unsolvable when $l=k/2$.

 When $l<k/2$, by solving \eqref{4.7} we derive
$$c=A-\frac{B-kA}{k-2l},\quad \text{and}\quad d=A+\frac{B-kA}{k-2l}.$$
Then we calculate $F$ at $P_{k, l}(a)$ that
\begin{align*}
    F(P_{k, l}(a))
    =&(-2\b+2-a)^3+(N-1-k)a^3\\
    &-3A\left[(-2\b+2-a)^2+(N-1-k)a^2\right]\\
    &+2N(1-\b)-2kA^3-3A^2(B-kA)+\frac{(B-kA)^3}{(k-2l)^2},
\end{align*}
hence by  \eqref{3-17}, we get
\begin{align}\label{4.9}
    F(P_{k, l}(a))>F(P_{k, 0}(a)), \quad \text{for $ 1\le l<k/2$,}
\end{align}
here
$$
P_{k, 0}(a)=\Big(-2\b+2-a, \underbrace{a, \cdots, a}_{N-1-k}, \underbrace{B/k, \cdots, B/k}_k\Big).
$$

Substituting $P_{k,0}(a)$ into $F$, we get
\begin{align}\label{3-19}
 F(P_{k, 0}(a))=2 (a-1) (a+\b-1)D+D^2 \left(\frac{D}{k^2}+\frac{3a-3+2\b}{k}\right).
\end{align}
Here, $D=N-2+2\b-(N-2)a$. By \eqref{3-14}, it follows that $D\ge0$.
Let $g(s)=Ds^2+(3a-3+2\b)s$, and 
$$
g'(\frac 1 k)=\frac{2D}{k}+(3a-3+2\b)\ge\frac{(N+2)\b}{N-2}>0,
$$
where we have used $k\le N-2$  and $a\ge-(\b-1)$. Therefore we conclude from \eqref{3-19} that
\begin{align}\label{3-20}
   F(P_{k, 0}(a))> F(P_{N-2, 0}(a)), \quad \text{for $1\le k\le N-3$.}
\end{align}

Combining inequalities \eqref{4.9} and \eqref{3-20}, we conclude that the function $F(x)$ attains its minimum value  at 
$$
P_{N-2, 0}(a)=\left(-2\b+2-a, a, \frac{N-2+2\b}{N-2}, \cdots, \frac{N-2+2\b}{N-2}\right)
$$
under the constraints: $x_1\le x_2\le \cdots \le x_N$, $x_1+x_2=-2(\b-1)$, $x_2=a$ and  $\sum_{j=1}^N x_j=N$.

Substituting $P_{N-2,0}(a)$ into $F$ yields
\begin{align*}
    F(P_{N-2, 0}(a))=-2\b a^2+4\b (1-\b )a+C(n),
\end{align*}
with 
\begin{align*}
C(n)=2N(\b-1)+2\b(2\b-1)+\frac{8\b^3(N-1)}{(N-2)^2}.
\end{align*}
Noting that $a\in [-\b+1, \frac{N-2+2\b}{N-2}]$, we have
\begin{align}\label{4.10}
    F(P_{N-2,0}(a))> F\left(P_{N-2, 0}\left(\frac{N-2+2\b}{N-2}\right)\right)=0
\end{align}
for $-\b+1\le a< \frac{N-2+2\b }{N-2}$. Here
\begin{align*}
P_{N-2, 0}\left(\frac{N-2+2\b}{N-2}\right)
=\left(-\frac{2(\b-1)(N-1)+N}{N-2}, \frac{N-2+2\b }{N-2}, \cdots, \frac{N-2+2\b}{N-2}\right).
\end{align*}
Hence, we obtain that on the boundary of $\Omega$: $x_1+x_2=-2(\b-1)$, $F(x)$ attains its minimum value of $0$  with the minimizer at $P_{N-2, 0}\left(\frac{N-2+2\b}{N-2}\right)$.

From the above two steps,  we prove that under the constraints 
$\sum_{j=1}^N x_j=N$, $x_1\le x_2\le \cdots\le x_N$, and $x_1+x_2\ge -2(\b-1)$, the function $F(x)$ attains its minimum value of $0$, achieved at the points 
\begin{align*}(1, \cdots, 1) \quad \text{and} \quad \left(-\frac{2(N-1)\theta+N}{N-2}, \frac{N+2\theta}{N-2}, \cdots, \frac{N+2\theta}{N-2}\right).
\end{align*}
This completes the proof of Proposition \ref{pro 4.4} for $n\ge 8$.

For dimensions $n=4$ and $5$,  $\langle \Delta R, R\rangle$ 
can be expressed explicitly in terms of the curvature operator of the second kind as:
\begin{align}\label{3.22}
     \left\langle \Delta R, R\right\rangle=\begin{cases}
      8\left(\sum_{j=1}^9 \l_j^3-9\bar\l^3\right), &\quad n=4,\\ 
      8\left(\sum_{j=1}^{14} \l_j^3+\frac{1}{3}\bar\l\sum_{j=1}^{14}\l_j^2-\frac{56}{3}\bar\l^3\right), &\quad n=5.
     \end{cases}
\end{align}
See Section 4.2 of \cite{DF24}. Then an argument similar to the one used for $n\ge 8$ shows that Proposition   \ref{pro 4.4}  holds for $n=4$ and $n=5$.
 \end{proof}

Now we prove the main theorem. 
\begin{proof}[Proof of  Theorem \ref{thm1}]
By \eqref{k-e}, we have
\begin{align*}
     \Delta |R|^2
    =2|\nabla R|^2 + 2\langle \Delta R,R \rangle \ge 2|\nabla R|^2.
\end{align*}
Integrating this inequality over $M$ yields
\begin{align*}
    0=\int_M  \Delta |R|^2\, d\mu_g \ge\int_M 2|\nabla R|^2 \, d\mu_g \ge 0,
\end{align*}
which implies $|\nabla R|=0$. Therefore, $M$ is a symmetric space. 
Since $\theta(n)$, defined by \eqref{1.2} is less than $2(n-1)/(n+2)$ for $n\ge 5$ and equal to $1/2$ for $n=4$,  it follows from Theorems 1.5 and 1.8 of \cite{Li24}   that $M$ is either flat (when  $\bar \l=0$) or a rational homology sphere  (when $\bar \l>0$).

Compact symmetric spaces that are rational homology spheres were classified completely by Wolf  \cite[Theorem 1]{Wolf}. Apart from spheres, the   only simply connected example is $SU(3)/SO(3)$. 
Proposition \ref{pro 4.4} implies that the eigenvalues of $\mathring{R}$ are either $\lambda=(1,\cdots,1)\bar{\l}$ or $\left(-\frac{2(N-1)\theta+N}{N-2}, \frac{N+2\theta}{N-2}, \cdots, \frac{N+2\theta}{N-2}\right)\bar\l$. Combining this with \cite[Example 4.5]{NPW22}, we know that $M$ is a round sphere if $\bar \l>0$.
\end{proof}

\section*{Acknowledgments}
The first author expresses her gratitude to her advisor, Professor Ying Zhang, for lots of encouragement and helpful suggestions. The authors would like to thank Professor Xiaolong Li for many helpful discussions. The research of this paper is partially supported by  NSF of Jiangsu Province Grant No.BK20231309.

\bibliographystyle{plain}
\bibliography{ref}
\end{document}